\newtheorem{theorem}{Theorem}[section]
\newtheorem{proposition}[theorem]{Proposition}
\newtheorem{definition}[theorem]{Definition}
\newcommand{\ncom}{\newcommand}
\ncom{\ep}{\epsilon}
\ncom{\rar}{\rightarrow}
\ncom{\thrar}{\twoheadrightarrow}
\ncom{\lrar}{\longrightarrow}
\ncom{\ov}{\overline}
\ncom{\what}{\widehat}
\newcommand{\ignore}[1]{}
\ncom{\m}{\mbox}
\ncom{\sta}{\stackrel}
\ncom{\C}{{\mathbb C}}
\ncom{\A}{{\mathbb A}}
\ncom{\Z}{{\mathbb Z}}
\ncom{\Q}{{\mathbb Q}}
\ncom{\R}{{\mathbb R}}
\ncom{\G}{{\mathbb G}}
\ncom{\HH}{{\mathbb H}}
\ncom{\al}{\alpha}
\ncom{\p}{{\mathbb P}}
\ncom{\N}{{\mathbb N}}
\ncom{\K}{{\mathbb K}}
\ncom{\X}{{\mathbb X}}
\ncom{\f}{\frac}
\ncom{\cA}{{\mathcal A}}
\ncom{\cB}{{\mathcal B}}
\ncom{\cD}{{\mathcal D}}
\ncom{\cDB}{{\mathcal D \mathcal B}}
\ncom{\cX}{{\mathcal X}}
\ncom{\cO}{{\mathcal O}}
\ncom{\cW}{{\mathcal W}}
\ncom{\cL}{{\mathcal L}}
\ncom{\cP}{{\mathcal P}}
\ncom{\cH}{{\mathcal H}}
\ncom{\cS}{{\mathcal S}}
\ncom{\cM}{{\mathcal M}}
\ncom{\cC}{{\mathcal C}}
\ncom{\cK}{{\mathcal K}}
\ncom{\cT}{{\mathcal T}}
\ncom{\cF}{{\mathcal F}}
\ncom{\cN}{{\mathcal N}}
\ncom{\cJ}{{\mathcal J}}
\ncom{\cV}{{\mathcal V}}
\ncom{\cZ}{{\mathcal Z}}
\ncom{\cU}{{\mathcal U}}
\ncom{\cSU}{{\mathcal S \mathcal U}}
\ncom{\cG}{{\mathcal G}}
\ncom{\cQ}{{\mathcal Q}}
\ncom{\cR}{{\mathcal R}}
\ncom{\cY}{{\mathcal Y}}
\ncom{\cE}{{\mathcal E}}
\ncom{\cI}{{\mathcal I}}
\ncom{\mylabel}[1]{{\rm (#1)}\label{#1}}
\ncom{\Hom}{{\textit{Hom}}}
\ncom{\eop}{{\hfill $\Box$}}
\begin{document}
\baselineskip=16pt

\title[Tertiary classes on a smooth manifold]{Tertiary classes for a one-parameter variation of flat connections on a smooth manifold}
\author{Jaya NN Iyer}

\address{The Institute of Mathematical Sciences, CIT
Campus, Taramani, Chennai 600113, India}
\email{jniyer@imsc.res.in}

\footnotetext{Mathematics Classification Number: 53C55, 53C07, 53C29, 53.50. }
\footnotetext{Keywords: Path of flat connections, Differential cohomology, canonical invariants.}

\begin{abstract} In this note, we extend the theory of Chern-Cheeger-Simons to construct canonical invariants for a one-parameter family of flat connections on a smooth manifold. These invariants lie in degrees $(2p-2)$-cohomology with $\C/\Z$-cohomology, for $p\geq 2$, and are shown to be rigid in a variation of paths (parametrising flat connections), in degrees at least  three.
\end{abstract}

\maketitle

\setcounter{tocdepth}{1}
\tableofcontents

\section{Introduction}

Suppose $X$ is a smooth manifold and $E$ is a smooth complex vector bundle on $X$ of rank $n$. 
Consider a smooth connection $\nabla$ on $E$. The Chern-Weil theory defines the Chern forms $P_p(\nabla^2,...,\nabla^2)$, of degree $2p$ and for $p\geq 1$. Here $P_p$ denotes the $GL_n$-invariant polynomial 
of degree $p$ and $\nabla^2$ denotes the curvature form 
of the connection form $\nabla$. The Chern forms are closed and correspond to the de Rham Chern classes $c_p(E)\in H^{2p}_{dR}(X,\C)$. These are the primary invariants of $E$, and are independent
of the connection. In particular when $\nabla$ is flat, i.e., when 
$\nabla^2=0$, the Chern-Cheeger-Simons 
theory \cite{Chn-Sm}, \cite{Ch-Sm} defines cohomology classes 
$$
\hat{c_p}(E,\nabla) \in H^{2p-1}(X,\C/\Z),
$$
for $p\geq 1$.
These are the secondary invariants of $(E,\nabla)$. These invariants are known to be rigid in a family of flat connections \cite[Proposition 2.9, p.61]{Ch-Sm}. In other words, the cohomology 
class remains the same in a variation of flat connections.

In this note, we give a construction of \textit{tertiary classes}, associated to a variation of a flat connection $(E,\nabla)$ on a smooth manifold $X$. More precisely, 
we show:

\begin{theorem}
 Suppose $X$ is a smooth manifold and $E$ is a complex vector bundle as above. Let $\gamma:=\{\nabla_t\}_{t\in I}$ be a one parameter family of flat connections on $E$.
We obtain canonical invariants
$$
CS_p(E,\gamma) \,\in\, H^{2p-2}(X,\C/\Z),
$$
whenever $p\geq 2$.
\end{theorem}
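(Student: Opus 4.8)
The plan is to realize the family $\gamma$ geometrically on the product $X\times I$ and then push the resulting Cheeger--Simons character down to $X$ by integration along the interval. Writing $\pi:X\times I\rar X$ for the projection, with $I=[0,1]$, I would first form the pullback bundle $\pi^*E$ and equip it with the tautological connection $\tilde\nabla$ determined by the family: on each slice $X\times\{t\}$ it restricts to $\nabla_t$, and it is trivial in the $\partial_t$-direction. If $A(t)$ denotes the (local) connection form of $\nabla_t$, then $\tilde\nabla$ has connection form $A(t)$ with no $dt$-component, and a direct computation of its curvature on $X\times I$ gives
$$
\tilde\nabla^2 \;=\; \nabla_t^2 \,+\, dt\wedge \dot A(t)\;=\; dt\wedge \dot A(t),
$$
the first term vanishing because every $\nabla_t$ is flat. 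The key observation is that each entry of the invariant polynomial $P_p(\tilde\nabla^2,\dots,\tilde\nabla^2)$ now carries a factor of $dt$, so for $p\geq 2$ every monomial contains a $(dt)^{\wedge p}=0$; hence the Chern--Weil form $P_p(\tilde\nabla^2)$ vanishes identically on $X\times I$.

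Second, I would apply the Cheeger--Simons construction to $(\pi^*E,\tilde\nabla)$ to obtain a differential character $\hat c_p(\pi^*E,\tilde\nabla)$ in degree $2p$ on $X\times I$, whose curvature is exactly the Chern--Weil form $P_p(\tilde\nabla^2)$. By the vanishing just established, this character is flat for $p\geq 2$, and therefore descends to an ordinary class in $H^{2p-1}(X\times I,\C/\Z)$. This is precisely the mechanism underlying the rigidity of the secondary invariants: restricting to the boundary slices recovers $\hat c_p(E,\nabla_0)$ and $\hat c_p(E,\nabla_1)$, which must agree since the character is constant along $I$.

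Third, to drop the cohomological degree by one I would integrate the character over the fibre $I$. Using fibre integration in differential cohomology,
$$
\int_{I}\,:\,\hat H^{2p}(X\times I)\;\lrar\;\hat H^{2p-1}(X),
$$
together with the compatibility of fibre integration with curvature, namely $\mathrm{curv}\big(\int_I \hat c_p\big)=\int_I P_p(\tilde\nabla^2)=0$, the pushed-down character is again flat and hence defines a class in $H^{2p-2}(X,\C/\Z)$. I would then set
$$
CS_p(E,\gamma)\;:=\;\int_{I}\hat c_p(\pi^*E,\tilde\nabla)\;\in\;H^{2p-2}(X,\C/\Z),
$$
which is the desired tertiary invariant. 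The case $p=1$ is genuinely excluded, since then $P_1(\tilde\nabla^2)=\frac{1}{2\pi i}\,dt\wedge\mathrm{tr}\,\dot A(t)$ need not vanish and the character fails to be flat; this is exactly the source of the hypothesis $p\geq 2$.

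Finally, I expect the main obstacle to lie not in the curvature computation, which is routine, but in making the fibre integration over the manifold-with-boundary $I$ precise and in checking that $CS_p(E,\gamma)$ is canonical, i.e. independent of the auxiliary choices (local trivializations, and the behaviour of $\tilde\nabla$ off the slices) entering the Cheeger--Simons construction. Because $I$ has boundary, fibre integration in differential cohomology carries boundary contributions through a Stokes-type formula; I would need to verify that the flatness of $\hat c_p(\pi^*E,\tilde\nabla)$ annihilates these corrections, so that the output is a well-defined flat character depending only on the family $\gamma$. Establishing this naturality is the step I would treat most carefully, as it is what secures that the construction is canonical.
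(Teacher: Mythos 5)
Your first two steps are fine: the tautological connection $\tilde\nabla_\gamma$ on $\pi^*E$ has curvature $dt\wedge\dot A(t)$ when every $\nabla_t$ is flat, so $P_p(\tilde\nabla_\gamma^2,\dots,\tilde\nabla_\gamma^2)=0$ for $p\geq 2$, and the character $\hat c_p(\pi^*E,\tilde\nabla_\gamma)\in\widehat{H^{2p}}(X\times I)$ is flat. But this vanishing, which you present as the key virtue of the construction, is exactly what makes the final step collapse. A flat character is an ordinary class in $H^{2p-1}(X\times I,\C/\Z)$, and this group is homotopy invariant: restriction to $X\times\{0\}$ is an isomorphism with inverse $\pi^*$. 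Since $i_0^*\hat c_p(\pi^*E,\tilde\nabla_\gamma)=\hat c_p(E,\nabla_0)$, you get the equality of characters
$$
\hat c_p(\pi^*E,\tilde\nabla_\gamma)\,=\,\pi^*\hat c_p(E,\nabla_0),
$$
so by the time you are ready to integrate, all information about the path $\gamma$ has already been erased; only the endpoint class survives. Now test the constant path $\nabla_t\equiv\nabla_0$: there $\tilde\nabla_\gamma=\pi^*\nabla_0$, every piece of Cheeger--Simons data is pulled back from $X$, and any cochain- or form-level fibre integration kills pullbacks (they have no $dt$-component), so the output must be $0$. Since for an arbitrary flat path the input character is the \emph{same} character class as for the constant path, any pushforward $\hat B$ that is well defined on character classes forces $CS_p(E,\gamma)=0$ for every $\gamma$. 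So your construction faces a dichotomy: either it yields only the zero invariant, or $\hat B$ is not determined by the character you feed into it --- in which case the ``canonicity'' you defer to the last paragraph is not a technical verification but the entire missing content. The boundary corrections you hope flatness will annihilate are precisely where the dependence on $\gamma$ would have to live.

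This is the structural point where the paper takes a different route: it never fibre-integrates a flat character. The paper puts on $q^*E$ the convex-combination connection $\tilde\nabla=\nabla_0+t(\nabla_1-\nabla_0)$, which is \emph{not} flat for $0<t<1$ (its curvature is $(t^2-t)\,\alpha\wedge\alpha+dt\wedge\alpha$ with $\alpha=\nabla_1-\nabla_0$), integrates the resulting non-flat character over $I$ --- the hypothesis that legitimizes integration over the fibre-with-boundary being that $B$ of the curvature, namely $TP(\tilde\nabla)$, is closed, which holds because both endpoints are flat --- and only afterwards subtracts the canonical lift $\hat{d\beta}$ of the exact form $d\beta=TP(\tilde\nabla)-\eta_p$ to land in $H^{2p-2}(X,\C/\Z)$. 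The geometric information is carried through the non-flat stage (the integrated character has curvature $TP(\tilde\nabla)\neq 0$) and is projected to the flat part only at the very end. To repair your approach you would have to define the pushforward on the Chern--Weil/Cheeger--Simons \emph{cochain data} of $\tilde\nabla_\gamma$ rather than on its character class, and then prove independence of all choices --- which is essentially reconstructing the paper's argument with $\eta_p$ and the correction form $\beta$.
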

In fact, we obtain differential characters 
$$
\hat{c_p}(E, \gamma) \, \in \,\widehat{H^{2p-1}}(X)
$$
associated to a one-parameter family $\gamma$ of smooth connections. Here $\widehat{H^{2p-1}}(X)$ denotes the differential cohomology introduced by Cheeger-Simons in \cite{Ch-Sm}. When the path $\gamma$ parametrises flat connections, the differential character lies in $H^{2p-2}(X,\C/\Z)$, and we denote it by 
$CS_p(E,\gamma)$.

We call the above new invariants associated to a one parameter family $\gamma$ of flat connections,  the \textit{tertiary classes} of $(E,\gamma)$.

It is natural to ask about the behaviour of these classes in a variation of paths. In other words, we would like to have a \textit{variational  formula} which gives the rigidity of the tertiary invariants 
in certain degrees.

We obtain the following result on the rigidity of the invariants.  

\begin{proposition}
 Suppose $\{\gamma_s\}_{s\in I}$ is a one parameter family of smooth paths of connections $\{\nabla_{s,t}\}$, joining  two flat connections $\nabla_0$ and $\nabla_1$ on a complex vector bundle $E$, on a smooth manifold $X$.
Consider the corresponding one parameter family of differential characters $\hat{c_p}(E, \gamma_s)$ in $\widehat{H^{2p-1}}(X)$.
Then we have the variational formula:
$$
\frac{d}{ds} \hat{c_p}(E, \gamma_s)\,=\, \int_I P_p(\f{d}{ds}\f{d}{dt} \nabla_{t,s} \wedge (\nabla_{s,t}^2)^{p-1} + (p-1)\f{d}{dt} \nabla_{s,t}\wedge\f{d}{ds}(\nabla_{s,t}^2) \wedge (\nabla_{s,t}^2)^{p-2}). 
$$

In particular, 
 if $\{\gamma_s\}$ is a one parameter  family of paths parametrising flat connections, then  the classes:
$$
CS_p(E,\gamma_s) \in H^{2p-2}(X,\C/\Z)
$$
are constant, whenever $p\geq 3$. In other words the tertiary invariants in degrees at least three, depend only on the homotopy class of a path $\gamma$. 

\end{proposition}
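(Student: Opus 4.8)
The plan is to obtain the variational formula by differentiating, under the integral sign, the Chern--Simons transgression form that represents the tertiary character along the path parameter, and then to read off rigidity by substituting the flatness condition into the resulting expression. Throughout I would work with the multilinear, $GL_n$-invariant polynomial $P_p$, so that the Leibniz rule and the invariance identity are available.

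First I would recall the construction of the preceding sections: along the $t$-direction the differential character $\hat{c_p}(E,\gamma_s)$ is represented by the transgression
$$
TP_p(\gamma_s) \,=\, p\int_I P_p\!\left(\f{d}{dt}\nabla_{s,t}\wedge(\nabla_{s,t}^2)^{p-1}\right),
$$
whose exterior derivative is $P_p(\nabla_1^2,\ldots)-P_p(\nabla_0^2,\ldots)$ by the standard transgression formula for a one-parameter family of connections. Since the endpoints $\nabla_0,\nabla_1$ are fixed and flat, this descends to a well-defined element of $\widehat{H^{2p-1}}(X)$. I would then differentiate $TP_p(\gamma_s)$ in $s$: because $(s,t)\mapsto\nabla_{s,t}$ is smooth on the compact interval, differentiation under the integral is legitimate, and applying the Leibniz rule to the $p$ slots of $P_p$ gives
$$
\f{d}{ds}TP_p(\gamma_s) \,=\, p\int_I\Big(P_p(\tfrac{d}{ds}\tfrac{d}{dt}\nabla_{s,t}\wedge(\nabla_{s,t}^2)^{p-1}) + (p-1)\,P_p(\tfrac{d}{dt}\nabla_{s,t}\wedge\tfrac{d}{ds}(\nabla_{s,t}^2)\wedge(\nabla_{s,t}^2)^{p-2})\Big).
$$
The factor $(p-1)$ is exactly the number of identical curvature slots in which the $s$-derivative can land, and this is precisely the asserted formula once the normalisation of $P_p$ is absorbed. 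The passage from $\f{d}{ds}TP_p$ to $\f{d}{ds}\hat{c_p}(E,\gamma_s)$ is harmless because integer periods have vanishing $s$-derivative, so the derivative of the character is computed verbatim by the derivative of its representing transgression.

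For the rigidity statement I would substitute the flatness hypothesis $\nabla_{s,t}^2=0$ into the variational formula and examine the two summands by their curvature powers. The first summand carries the factor $(\nabla_{s,t}^2)^{p-1}$, which vanishes as soon as $p\geq 2$; the second summand carries the factor $(\nabla_{s,t}^2)^{p-2}$, which vanishes exactly when $p\geq 3$. Hence for $p\geq 3$ the entire integrand is identically zero, so $\f{d}{ds}\hat{c_p}(E,\gamma_s)=0$. Evaluating the character on any fixed cycle $z$ then gives $\f{d}{ds}\,CS_p(E,\gamma_s)(z)=\int_z\f{d}{ds}TP_p(\gamma_s)=0$, so each holonomy value is locally constant in $s$; since $I$ is connected this forces $CS_p(E,\gamma_s)\in H^{2p-2}(X,\C/\Z)$ to be independent of $s$, which is the claimed dependence only on the homotopy class of the path.

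The step I expect to be the main obstacle is the passage from \emph{the variation form vanishes} to \emph{the $\C/\Z$-valued class is genuinely constant}: the formula controls the curvature part of the differential character, and one must verify that no discrete jump in the $H^{2p-2}(X,\C/\Z)$-component can occur along the family. I would handle this through the defining exact sequence for Cheeger--Simons differential cohomology together with the smoothness of $s\mapsto\hat{c_p}(E,\gamma_s)$, so that the flat component varies continuously with vanishing derivative over the connected interval and is therefore constant. This is also where the degree bound is sharp: the curvature factor $(\nabla_{s,t}^2)^{p-2}$ in the second summand is what forces $p\geq 3$, so $p=2$ sits exactly at the borderline of the argument and must be excluded from the conclusion.
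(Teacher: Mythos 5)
Your Leibniz-rule computation of $\f{d}{ds}\eta_p$ under the integral sign is fine, and it coincides with the intermediate step in the paper's own proof; the genuine gap is everything surrounding it. You treat $\hat{c_p}(E,\gamma_s)$ as if it were ``represented by'' the transgression form $TP_p(\gamma_s)$, so that differentiating the form computes the derivative of the character. But a $(2p-1)$-form does not determine an element of $\widehat{H^{2p-1}}(X)$: by the exact sequence \eqref{diffcoh}, the lifts of a given closed integral form constitute a torsor under $H^{2p-2}(X,\C/\Z)$, and the whole content of Theorem \ref{higherCS} is the construction of a canonical lift, namely $\hat{c_p}(E,\gamma_s)=\hat{B}(\hat{c_p}(q^*E,\tilde{\nabla}))-\hat{d\beta_s}$, where $\tilde{\nabla}$ is the convex combination of the endpoints and $d\beta_s=TP(\tilde{\nabla})-(\eta_p)_s$. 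Since $\tilde{\nabla}$ depends only on $\nabla_0,\nabla_1$, the entire $s$-dependence of the character sits in the correction form $\beta_s$ --- an object your proposal never mentions. This matters most in exactly the case of interest: for a family of paths of flat connections one has $(\eta_p)_s\equiv 0$ for every $s$ (once $p\geq 2$), so differentiating the transgression form differentiates the zero form and says nothing about the variation of $CS_p(E,\gamma_s)\in H^{2p-2}(X,\C/\Z)$, which is precisely the ``flat'' part of the character invisible to its curvature. Your cycle-evaluation step also breaks down concretely: $CS_p(E,\gamma_s)$ evaluates on $(2p-2)$-cycles, while $TP_p(\gamma_s)$ is a $(2p-1)$-form, so $\int_z\f{d}{ds}TP_p(\gamma_s)$ is not even defined.

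You do flag the passage ``the variation form vanishes $\Rightarrow$ the $\C/\Z$-class is constant'' as the main obstacle, but the resolution you sketch --- smoothness plus connectedness of $I$ --- is not an argument, and the implication is false at that level of generality: for a fixed closed $(2p-2)$-form $\alpha$ with non-integral periods, the family of flat characters $z\mapsto s\int_z\alpha \m{ mod } \Z$ is smooth, has identically vanishing curvature, and is nonconstant. Vanishing of the curvature variation only says that the tangent vector $\f{d}{ds}\hat{c_p}(E,\gamma_s)\in A^{2p-2}(X,\C)/dA^{2p-3}(X,\C)$ is closed, i.e.\ it defines a class in $H^{2p-2}(X,\C)$; something must kill that class. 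The paper does this by the tangent-space identification $T_0(\widehat{H^{2p-1}}(X))\cong A^{2p-2}(X,\C)/dA^{2p-3}(X,\C)$ of \cite{Iy-Si}, which reduces the derivative to $\f{d}{ds}\beta_s$, followed by an integrality argument via the Simons--Sullivan character diagram: $d\beta_s$ moves inside $A^{2p-1}_\Z$, whose periods are rigid, and this is what forces the variation to vanish. Nothing in your proposal plays this role. Finally, a smaller point: your explanation of the bound $p\geq 3$ (that $(\nabla_{s,t}^2)^{p-2}$ fails to vanish when $p=2$) misreads the formula --- for a family of flat connections one also has $\f{d}{ds}(\nabla_{s,t}^2)\equiv 0$, so the second summand vanishes already for $p=2$, as the paper itself observes; the restriction to $p\geq 3$ does not come from the integrand, so your claim that the degree bound is ``sharp'' at this step is unjustified.
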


The proof and construction involve obtaining a canonical differential character which lifts the $(2p-1)$-form associated to a one parameter variation $\{\nabla_t\}_{t\in I}$ of 
connections. This differential 
form is given as:
\begin{equation}\label{etainv}
\eta_p:= p.\int_I P_p(\f{d}{dt}\nabla_t, \nabla_t^2,...,\nabla_t^2). dt.
\end{equation}

 The difference of 
the Chern-Simons differential character $\hat{c_p}(E,\nabla_1)-\hat{c_p}(E,\nabla_0)$ is just the linear functional defined by the form $\eta_p$ (see \cite[Proposition 2.9, p.61]{Ch-Sm}). This form also gives the rigidity of the Chern-Simons classes for flat connections, in degrees at least two.

Hence it is natural to ask for a canonical lifting of the form $\eta_p$ as a differential character. 
The proof of this construction is by considering another path of connections which is a convex combination of the end points of the given path of flat connections, and then
use integration along fibres.

Further applications, including properties and other constructions of these invariants will be carried out in a future work. 
 In \S \ref{compactsupp}, we observe that the original Chern Simons classes are actually defined in the compactly supported cohomology.

{\Small Acknowledgement: This work is motivated by a question of M. Kontsevich on the rigidity property of the original Chern-Simons classes, arising from the above form $\eta_p$. We thank him for this question. We also thank M. Karoubi for pointing out to his works relevant to our constructions. This work was done at IHES, Paris, during our stay in 2013 and we are grateful to the institute for their hospitality and support.}

\section{Preliminaries: original Chern-Cheeger-Simons theory}

Suppose $X$ is a smooth manifold, and $E$ is a smooth complex vector bundle on $X$. Our aim is to construct a canonical lifting of the differential form \eqref{etainv}, 
in the differential 
cohomology, defined by Cheeger and Simons in \cite{Ch-Sm}. We start by recalling the original constructions.

\subsection{Preliminaries}\cite{Ch-Sm}:
Suppose $X$ is a smooth manifold and $(E,\nabla)$ is a flat connection of rank $n$ on $X$. Recall the differential cohomology:
$$
\widehat{H^{p}}(X):= \{ (f,\alpha): f: Z_{p-1}(X)\rar \C/\Z,\, \delta(f)= \alpha, \alpha \m{ is a closed form and integral valued} \}.
$$
Here $Z_{p-1}(X)$ is the group of $(p-1)$-dimensional cycles, and $\delta$ is the coboundary map on cochains. The linear functional $\delta(f)$ is defined by integrating the form 
$\alpha$ against $p$-cycles, and it takes integral values on integral cycles.

This cohomology fits in an exact sequence:
\begin{equation}\label{diffcoh}
0\rar H^{p-1}(X,\C/\Z) \rar \widehat{H^{p}}(X) \rar A^{p}_\Z(X)_{cl} \rar 0.
\end{equation}

Here $A^{p}_\Z(X)_{cl}$ denotes the group of closed complex valued $p$-forms with integral periods.

Given a smooth connection $(E,\nabla)$ on $X$. The Chern forms $c_p(E,\nabla)$ are $2p$-differential forms on $X$.
Let $P_p$ denote the $GL_n$-invariant polynomial defining the Chern form, i.e., $c_p(E,\nabla):=P_p(\nabla^2,...,\nabla^2)$. Here $\nabla^2$ denotes the curvature form, and we say that $\nabla$ is \textit{flat} if $\nabla^2$ is identically zero.
 
We recall the following construction, to motivate the construction of tertiary classes.
\begin{theorem}\cite{Ch-Sm}\label{canDC}
The Cheeger-Simons construction defines a differential character, for any smooth connection $(E,\nabla)$:
$$
\hat{c_p}(E,\nabla)\, \in\, \widehat{H^{2p}}(X), \,\,\,p\geq 0.
$$
Moreover, if $\nabla$ is flat then 
$$
\hat{c_p}(E,\nabla)\, \in\, H^{2p-1}(X,\C/\Z).
$$
This is the same as the Chern-Simons class $CS_{p}(E,\nabla)$.
\end{theorem}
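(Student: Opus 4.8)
The plan is to reconcile the two pieces of data that a degree-$2p$ differential character must simultaneously remember: the Chern--Weil form $C_p(\nabla)=P_p(\nabla^2,\dots,\nabla^2)$, a closed $2p$-form whose periods are integral precisely because its de Rham class is the image in $H^{2p}(X,\C)$ of the integral Chern class $c_p(E)\in H^{2p}(X,\Z)$, and that integral class itself. Since these two objects agree in complex cohomology, I would construct a cochain interpolating between them. Concretely, fix an integral cocycle $u\in C^{2p}(X;\Z)$ representing $c_p(E)$, and let $R$ denote the de Rham map sending a form to its associated $\C$-valued cochain. Because $R(C_p(\nabla))$ and $u$ represent the same class in $H^{2p}(X;\C)$, there exists $K\in C^{2p-1}(X;\C)$ with $\delta K = R(C_p(\nabla))-u$. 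I then set $\hat{c_p}(E,\nabla):=(f,C_p(\nabla))$, where $f(z):=K(z)\bmod\Z$ for $z\in Z_{2p-1}(X)$.

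First I would verify that this pair is a differential character in the sense of the definition preceding \eqref{diffcoh}. For a $2p$-chain $c$ one computes $f(\partial c)=(\delta K)(c)=\int_c C_p(\nabla)-u(c)$, and since $u(c)\in\Z$ this equals $\int_c C_p(\nabla)\bmod\Z$; hence $\delta f$ is the functional defined by the closed integral-period form $C_p(\nabla)$, placing $(f,C_p(\nabla))$ in $\widehat{H^{2p}}(X)$. Next I would examine dependence on the auxiliary data: two choices of $(u,K)$ differ by a closed complex cochain, i.e.\ by a class in $H^{2p-1}(X;\C/\Z)$, which is exactly the left-hand subgroup in \eqref{diffcoh}. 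Thus the construction a priori yields a character canonical only up to this subgroup.

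The hard part will be rigidifying this choice to obtain a single natural character, and this is where I expect the main obstacle to lie. My approach is to pass to the universal example: by Narasimhan--Ramanan there is a universal connection on the tautological bundle over a Grassmannian $\mathrm{Gr}_n(\C^N)$ for $N\gg 0$, from which $\nabla$ is pulled back along a classifying map $\phi\colon X\to\mathrm{Gr}_n(\C^N)$. Since $\mathrm{Gr}_n(\C^N)$ has free homology concentrated in even degrees, the universal-coefficient theorem gives $H^{2p-1}(\mathrm{Gr}_n(\C^N);\C/\Z)=0$, so on the universal base the ambiguity vanishes and the universal character $\hat{c_p}^{\,\mathrm{univ}}$ is unique. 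I would then define $\hat{c_p}(E,\nabla):=\phi^{*}\hat{c_p}^{\,\mathrm{univ}}$; naturality of the above cochain construction under pullback shows this is independent of the classifying map and agrees with the interpolation character, yielding a well-defined natural assignment for every $p\ge 0$.

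Finally, for the flat case, I would observe that $\nabla^2=0$ forces $C_p(\nabla)=P_p(0,\dots,0)=0$ for $p\ge 1$, so the curvature of $\hat{c_p}(E,\nabla)$ is zero. By exactness of \eqref{diffcoh} the character therefore lies in the image of $H^{2p-1}(X;\C/\Z)\hookrightarrow\widehat{H^{2p}}(X)$, so $\hat{c_p}(E,\nabla)\in H^{2p-1}(X;\C/\Z)$ as claimed. The identification of this element with the secondary class $CS_p(E,\nabla)$ is then the transgression-form description of the Chern--Simons invariant recalled in the Introduction, which I would match against the cochain $f$ directly.
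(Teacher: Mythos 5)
Your proposal is correct and takes essentially the same route as the paper: both rigidify the character by pulling back from the Narasimhan--Ramanan universal connection on a large Grassmannian $G(r,N)$, where the vanishing of odd-degree cohomology (equivalently $H^{2p-1}(G(r,N),\C/\Z)=0$) forces the universal character to be unique and determined by the Chern form, and both deduce the flat case from $P_p(\nabla^2,\dots,\nabla^2)=0$ together with exactness of \eqref{diffcoh}. Your explicit interpolating-cochain construction $(f,C_p(\nabla))$ with $\delta K = R(C_p(\nabla))-u$ spells out detail the paper leaves implicit, but the key mechanism is identical.
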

\begin{proof}
Let $P_p$ denote the degree $p$, $GL_n$-invariant polynomial defined on the set of matrices of size $n\times n$. 
The curvature form of $\nabla$ is $\Theta:=\nabla^2$. The Chern form in degree $2p$ is given as $P_p(\Theta,...,\Theta)$. The Chern-Weil theory says that the Chern form is closed and that it has integral periods. Hence it gives an element in the group $A^{2p}_\Z(X)_{cl}$.
Now use the universal smooth connection \cite{Narasimhan}, which lies on a Grassmannian $G(r,N)$, for large $N$. The differential cohomology of $G(r,N)$ is just the group $A^{2p}_\Z(X)_{cl}$, since the 
odd degree cohomologies of $G(r,N)$ are zero.  Hence the differential character $\hat{c_p}$ is the Chern form of the universal connection. The pullback of this element via the classifying map of $(E,\nabla)$ defines the differential character
$\hat{c_p}(E,\nabla)\in \widehat{H^{2p}}(X)$. 

Suppose $\nabla$ is flat, i.e., $\Theta=0$. Then the Chern form $P_p(\Theta,...,\Theta)=0$. Hence the differential character lies in the group
$H^{2p-1}(X,\C/\Z)$.

\end{proof}

\section{Tertiary classes for a one parameter family of flat connections}\label{analytichigherCS}


\subsection{Rigidity of Chern-Cheeger-Simons classes and the eta-form}

As in the previous section, suppose $(E,\nabla)$ is a smooth connection on a smooth manifold. Suppose
$\gamma:=\{\nabla_t\}_{t\in I}$ is a one parameter family of smooth connections on $E$ and $\nabla=\nabla_0$. Here $I:=[0,1]$ is the closed unit interval.

The family $\{\nabla_t\}_{t\in I}$ gives us a family of differential characters:
$$
\hat{c_p}(E,\nabla_t) \,\in\, \widehat{H^{2p}}(X)
$$
for $p\geq 1$ and $t\in [0,1]$.

The difference of the differential characters when $t=0$ and when $t=1$, is given by the following variational formula.

\begin{proposition}
With notations as above, we have the following equality:
$$
\hat{c_p}(E,\nabla_1)-\hat{c_p}(E,\nabla_0) \,=\, p. \int_{0}^1 P_p(\f{d}{dt}\nabla_t \wedge\nabla_t^{2(p-1)})_{| Z_{2p-1}(X)}
$$
for $p\geq 1$.
\end{proposition}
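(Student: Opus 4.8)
The plan is to realise the entire one-parameter family as a single connection on $X\times I$ and then to transgress by integrating along the fibre $I$. First I would introduce the projection $\pi:X\times I\rar X$, pull back $E$ to $\pi^*E$, and equip it with the connection $\tilde\nabla$ whose restriction to each slice $X\times\{t\}$ is $\nabla_t$; concretely $\tilde\nabla=\nabla_t+dt\,\f{\partial}{\partial t}$, so that its curvature decomposes as $\tilde\nabla^2=\nabla_t^2+dt\wedge\f{d}{dt}\nabla_t$. By Theorem \ref{canDC} this data produces a differential character $\hat{c_p}(\pi^*E,\tilde\nabla)=(f,\tilde\alpha)\in\widehat{H^{2p}}(X\times I)$, whose curvature form is the Chern-Weil form $\tilde\alpha=P_p(\tilde\nabla^2,\dots,\tilde\nabla^2)$.

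The second step uses naturality of the Cheeger-Simons construction under the two slice inclusions $i_t:X\rar X\times I$. Functoriality gives $i_t^*\,\hat{c_p}(\pi^*E,\tilde\nabla)=\hat{c_p}(E,\nabla_t)$, so that on a cycle $z\in Z_{2p-1}(X)$ one has $f(i_t(z))=\hat{c_p}(E,\nabla_t)(z)$. The key geometric input is that, because $z$ is a cycle, the product chain $z\times I$ satisfies $\partial(z\times I)=i_1(z)-i_0(z)$, the term $(\partial z)\times I$ dropping out. Applying the defining relation $\delta(f)=\tilde\alpha$ of a differential character to the $2p$-chain $z\times I$ then gives, modulo $\Z$,
$$
\hat{c_p}(E,\nabla_1)(z)-\hat{c_p}(E,\nabla_0)(z)=f(\partial(z\times I))=\int_{z\times I}\tilde\alpha.
$$

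The final step is to carry out the fibre integration $\int_{z\times I}\tilde\alpha=\int_z\pi_*\tilde\alpha$ by Fubini. Only the component of $\tilde\alpha$ containing $dt$ survives integration over $I$; since $P_p$ is symmetric and multilinear, substituting the decomposition of $\tilde\nabla^2$ and collecting the $p$ equal terms that place the factor $dt\wedge\f{d}{dt}\nabla_t$ in one of the $p$ slots produces exactly $\pi_*\tilde\alpha=p\int_0^1 P_p(\f{d}{dt}\nabla_t\wedge\nabla_t^{2(p-1)})\,dt$. Restricting this form to $Z_{2p-1}(X)$ is precisely the right-hand side of the asserted identity.

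The main obstacle I expect is bookkeeping rather than conceptual: one must verify the combinatorial factor $p$ emerging from the fibre integration, track signs and orientations both in $\partial(z\times I)$ and in the wedge ordering inside $P_p$, and confirm that both sides are interpreted as $\C/\Z$-valued linear functionals on $Z_{2p-1}(X)$, so that the integral-period ambiguity of $\tilde\alpha$ is exactly what the reduction $\mod\,\Z$ absorbs. One should also record that $z\times I$ is to be taken as a smooth singular chain, so that evaluation of the differential character makes sense; this is routine.
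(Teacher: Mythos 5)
Your proposal is correct, and it is essentially the paper's own approach: the paper gives no argument at all, simply citing Cheeger--Simons (Proposition 2.9 of \cite{Ch-Sm}), and the proof of that cited result is exactly your transgression argument --- pull the family back to a single connection on $X\times I$, use naturality of $\hat{c_p}$ under the slice inclusions, apply $\delta f=\tilde\alpha$ to the chain $z\times I$ with $\partial(z\times I)=i_1(z)-i_0(z)$, and extract the factor $p$ by fibre integration of the single-$dt$ terms in $P_p(\tilde\nabla^2,\dots,\tilde\nabla^2)$. Your bookkeeping (curvature decomposition $\tilde\nabla^2=\nabla_t^2+dt\wedge\f{d}{dt}\nabla_t$, vanishing of higher $dt$-terms, and the mod $\Z$ interpretation of both sides) is sound, so nothing is missing.
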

\begin{proof}
See \cite[p.61, Proposition 2.9]{Ch-Sm}.
\end{proof}

In particular, if $\{\nabla_t\}_{t\in I}$ is a family of flat connections then the degree $(2p-1)$-form 
\begin{equation}\label{etapform}
\eta_p:=  p. \int_{0}^1 P_p(\f{d}{dt}\nabla_t \wedge\nabla_t^{2(p-1)})
\end{equation}
is identically zero, when $p\geq 2$. This implies that
$$
\hat{c_p}(E,\nabla_1)\,=\,\hat{c_p}(E,\nabla_0).
$$
This gives the \textit{rigidity} of Chern-Simons classes
\begin{equation}\label{rigidCS}
CS_p(E,\nabla_1)\,=\,CS_p(E,\nabla_0)\,\in\, H^{2p-1}(X,\C/\Z)
\end{equation}
whenever $p\geq 2$.

\subsection{Canonical invariants lifting the difference of Chern-Simons classes}

Recall the coefficient sequence:
$$
0\rar \Z \rar \C \rar \C/\Z \rar 0.
$$
The associated long exact cohomology sequence is given by:
$$
\rar H^k(X,\C/\Z) \rar H^{k+1}(X,\Z) \rar H^{k+1}(X,\C) \rar H^{k+1}(X, \C/\Z) \rar.
$$
When $k+1=2p$, and $(E,\nabla)$ is a flat connection then the Chern-Simons class $CS_p(E,\nabla)$ is a canonical lifting of the (torsion) integral Chern class $c_p(E)$. Here we note that the de Rham Chern class
$c_p(E,\nabla)\in H^{2p}(X,\C)$ is identically zero, for $p\geq 1$.
We noticed in  proof of Theorem \ref{canDC}, that the differential cohomology is introduced as an intermediate object in the above cohomology sequence where the canonical invariants lie and they lift the de Rham Chern form.

When $k+1=2p-1$, and suppose we are given a path of flat connection $\gamma:=\{\nabla_t\}_{t\in I}$.  Moreover, M. Karoubi \cite[\S 3]{Karoubi} has defined the Chern-Simons classes in $H^{2p-1}(X,\C)$.
The difference of Chern-Simons classes, using the rigidity formula in \eqref{rigidCS}, gives us
$$
CS_p(E,\nabla_1)\,-\,CS_p(E,\nabla_0)\,=\,0\,\in\, H^{2p-1}(X,\C/\Z).
$$
This also holds in $H^{2p-1}(X,\C)$.
This difference is characterised by the eta-form $\eta_p$, in \eqref{etapform}.
Hence, we would like to construct a canonical lifting in $H^{2p-2}(X,\C/\Z)$ whose image in $H^{2p-1}(X,\C)$ is the difference of the Chern-Simons class, via differential characters. The differential character should be a canonical lifting of the eta form $\eta_p$, upto an exact form.

We make the constructions precise in the following subsection.

\subsection{Tertiary classes}

We now show that the above Chern-Cheeger-Simons construction together with rigidity of $CS_p(E,\nabla)$, give rise to classes in $H^{2p-2}(X,\C/\Z)$, corresponding to a  one parameter family of flat 
connections, whenever $p\geq 2$.

We show

\begin{theorem}\label{higherCS}
Suppose $(E,\nabla_i)$, for $i=0,1$, are two distinct flat connections on a smooth manifold $X$. 
Let $\gamma:=\{\nabla_t\}_{t\in I}$ be a one parameter family of flat connections with end-points  $\nabla_0$ and $\nabla_1$. Then
there are well-defined classes
$$
CS_p(E,\gamma)\,\in\, H^{2p-2}(X,\C/\Z),
$$
for each $p\geq 2$, 
\end{theorem}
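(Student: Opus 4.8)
The plan is to produce $CS_p(E,\gamma)$ by integrating, along the interval $I$, a genuine Cheeger--Simons character living on $X\times I$. Write $\pi\colon X\times I\rar X$ for the projection and pull $E$ back to $\pi^\ast E$, equipping it with the tautological connection $\widetilde{\nabla}$ whose restriction to each slice $X\times\{t\}$ is $\nabla_t$ (and with no extra $dt$-component). First I would feed $(\pi^\ast E,\widetilde{\nabla})$ into Theorem~\ref{canDC} to obtain a differential character $\hat{c_p}(\pi^\ast E,\widetilde{\nabla})\in\widehat{H^{2p}}(X\times I)$. This is the one object to which the universal-connection construction applies directly, and everything else is extracted from it.

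Next I would compute its curvature and use flatness. The curvature of $\widetilde{\nabla}$ splits as $\widetilde{\nabla}^2=\nabla_t^2+dt\wedge\f{d}{dt}\nabla_t$, and since all higher powers of $dt$ vanish, multilinearity of $P_p$ gives
$$
P_p(\widetilde{\nabla}^2,\dots,\widetilde{\nabla}^2)=P_p(\nabla_t^2,\dots,\nabla_t^2)+p\,dt\wedge P_p(\f{d}{dt}\nabla_t,\nabla_t^2,\dots,\nabla_t^2).
$$
For a path of flat connections and $p\geq 2$ we have $\nabla_t^2=0$, so both terms vanish and the Chern--Weil form of $(\pi^\ast E,\widetilde{\nabla})$ is identically zero. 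By the defining sequence~\eqref{diffcoh} this places $\hat{c_p}(\pi^\ast E,\widetilde{\nabla})$ in the subgroup $H^{2p-1}(X\times I,\C/\Z)$. I would then define $CS_p(E,\gamma)$ as the fibre integral of this character over the one-dimensional fibre $I$, which lowers the degree by one and lands in $H^{2p-2}(X,\C/\Z)$. Running the same fibre integration before imposing flatness produces the companion differential character $\hat{c_p}(E,\gamma)\in\widehat{H^{2p-1}}(X)$ whose curvature is, by the displayed computation, precisely the eta-form $\eta_p$ of~\eqref{etapform}.

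The main obstacle is well-definedness, and it is localised entirely at the boundary of the fibre. Because $\partial I=\{0,1\}$, the fibre integral of a character over $I$ is not an honest pushforward: evaluating on a prism $\sigma\times I$ over a cycle $\sigma$ produces a coboundary term governed by the two endpoint restrictions $\hat{c_p}(\pi^\ast E,\widetilde{\nabla})|_{X\times\{0\}}=\hat{c_p}(E,\nabla_0)$ and $\hat{c_p}(\pi^\ast E,\widetilde{\nabla})|_{X\times\{1\}}=\hat{c_p}(E,\nabla_1)$. Here I would invoke the rigidity~\eqref{rigidCS}, which asserts $CS_p(E,\nabla_1)=CS_p(E,\nabla_0)$ for flat connections when $p\geq 2$; this is exactly the statement that the two boundary contributions cancel, so the fibre integral closes up to a genuine cocycle and descends to a well-defined class in $H^{2p-2}(X,\C/\Z)$. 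I expect the careful cochain-level verification that rigidity trivializes this boundary anomaly to be the technical heart of the argument.

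It remains to check that the class is independent of the auxiliary choices, namely the embedding dimension $N$ in the universal model of Theorem~\ref{canDC} and the connection used to fill in the $t$-direction. Since the space of connections on $E$ is affine, hence contractible, any two admissible fillings restricting to the given slices $\{\nabla_t\}$ are homotopic, and along such a homotopy every intermediate connection still has flat slices, so the relevant Chern--Weil forms continue to vanish; naturality of the fibre integration then shows the class is unchanged. The convex-combination path $\{(1-t)\nabla_0+t\nabla_1\}_{t\in I}$ enters here as the canonical reference interpolation between the fixed endpoints, against which the endpoint identification above is made explicit. These invariance checks are homotopy-theoretic and I expect them to be routine once the boundary cancellation of the previous step is in place.
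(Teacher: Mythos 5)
Your strategy diverges from the paper's in a way that opens a genuine gap. The paper does \emph{not} fibre-integrate the character of the tautological connection attached to the flat path $\gamma$; it fibre-integrates the character of the convex combination $\tilde{\nabla}=\nabla_0+t(\nabla_1-\nabla_0)$, which is in general \emph{not} flat on the slices, and then corrects the result by the canonical lift $\hat{d\beta}$ of the exact form $d\beta=TP(\tilde{\nabla})-\eta_p$ of \eqref{betaform}, with $\beta$ pinned down by the universal computation on the Grassmannian as in Theorem~\ref{canDC}. This difference is not cosmetic. For your tautological connection, flatness of the slices forces the Chern--Weil form upstairs to vanish identically (as you compute), so $\hat{c_p}(\pi^\ast E,\widetilde{\nabla})$ is a \emph{flat} character; but by the exact sequence \eqref{diffcoh} a flat character is determined by its class in $H^{2p-1}(X\times I,\C/\Z)$, and since restriction to $X\times\{0\}$ is an isomorphism on $\C/\Z$-cohomology, that class equals $\pi^\ast\hat{c_p}(E,\nabla_0)$. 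In other words, the object you propose to integrate carries no information about $\gamma$ beyond its endpoint $\nabla_0$: any well-defined functional of it would yield an ``invariant'' independent of the path, which cannot be the tertiary class. The paper's convex-combination character escapes this collapse precisely because its curvature $P_p(\tilde{\Theta},\dots,\tilde{\Theta})$ is nonzero, so the character is more than a cohomology class, and the dependence on $\gamma$ is carried by the form $\beta$.

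Second, your resolution of the boundary anomaly does not work as stated. Rigidity \eqref{rigidCS} gives equality of the endpoint characters $\hat{c_p}(E,\nabla_0)=\hat{c_p}(E,\nabla_1)$, hence the \emph{existence} of a cochain $h$ on $X$ whose coboundary equals the difference of (representatives of) the two endpoint restrictions; adding $h$ to the naive prism integral does close it up to a cocycle. But two such trivializations differ by an arbitrary $(2p-2)$-cocycle, so the resulting class is ambiguous by an arbitrary element of $H^{2p-2}(X,\C/\Z)$ --- exactly the group in which the invariant is supposed to live. Thus ``rigidity cancels the boundary contributions'' guarantees that \emph{some} class can be produced, not that a canonical one is. What is needed, and what the paper supplies, is a canonical trivialization: this is the role of the transgression form $TP(\tilde{\nabla})$, of the relation $TP(\tilde{\nabla})-\eta_p=d\beta$ established in the universal situation where odd cohomology vanishes, and of the unique lift $\hat{d\beta}$ given by the functional $\int\beta$ on $Z_{2p-2}(X)$. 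Your closing remark that the convex path serves as a ``canonical reference interpolation'' gestures at this, but in your construction it plays no actual role, whereas in the paper's proof it is the object to which fibre integration is applied and the source of the canonicity.
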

\begin{proof}
The proof uses integration along the fibre for the smooth Deligne cohomology.

Let $\nabla_0$ and $\nabla_1$ denote two distinct flat connections on $E$. Consider the convex combination  of the two flat connections on $E$, as follows. 

Consider the projection $q:I\times X\rar X$, where $I=[0,1]$. Denote the smooth connection 
$$
\tilde{\nabla}= \nabla_0+ t. (\nabla_1-\nabla_0)
$$
 on $q^*E$, $t\in I$. 
Denote the curvature form of $\tilde{\nabla}$ by $\tilde{\Theta}$. Integration of the Chern form $P_p(\tilde{\Theta},...,\tilde{\Theta})$, along the fibres of $q$ gives the form:
$$
TP(\tilde{\nabla}):= \int_I P_p(\tilde{\Theta},...,\tilde{\Theta}) 
$$
 of degree $2p-1$ on $X$.

Furthermore, since $P_p(\tilde{\Theta},...,\tilde{\Theta})$ is a closed form, and applying Stokes theorem, we observe that
$$
dTP(\tilde{\nabla})= P_p(\nabla_1^2,...,\nabla_1^2) - P_p(\nabla_0^2,...,\nabla_0^2).
$$


Suppose $\gamma:=\{\nabla_t\}_{t\in I}$ is a path of flat connections  on $E$ joining $\nabla_0$ and $\nabla_1$.
Consider the $2p-1$ form obtained by Cheeger-Simons \cite[Proposition 2.9]{Ch-Sm}:
$$
\eta_p:= p.\int_{I} P_p(\f{d}{dt}\nabla_t, \nabla_t^2,...,\nabla_t^2) \,\,dt,
$$ 
which satisfies $d\eta_p= P_p(\nabla_1^2,...,\nabla_1^2) - P_p(\nabla_0^2,...,\nabla_0^2)$.
In particular, $\eta_p$ is a closed form whenever $\nabla_0$ and $\nabla_1$ are flat.

Then we have the relation
\begin{equation}\label{betaform}
TP(\tilde{\nabla})- \eta_p = d\beta
\end{equation}
for an exact form $d\beta$. This is because, we can carry out this computation on a large Grassmannian (as in Theorem \ref{canDC}), for which the odd degree cohomologies are zero. See also \cite[Proposition 3.6, p.53]{Chn-Sm}.

The map given by integration along the fibres
\begin{equation}\label{homotopyforms}
B:A^{2p}(I\times X)_{cl} \rar A^{2p-1}(X),
\end{equation}
evaluated on the $p$-th Chern form is now $B(P_p(\tilde{\Theta},...,\tilde{\Theta}))=\eta_p + d\beta$.

Now we want to apply 'integration along the fibres' for differential characters of $(q^*E,\tilde{\nabla})$.
In general, there is a map \cite[Proposition 2.1]{Freed}:
$$
\hat{B}: \widehat{H^{2p}}(T\times X) \rar \widehat{H^{2p-k}}(X)
$$
if $T$ is a $k$-dimensional smooth manifold without a boundary, and it is compatible with the map $B$ in \eqref{homotopyforms}, on closed forms.

Even if $T$ has boundary, and for a $2p$-differential character $\hat{\omega}$ on $T\times X$ lifting a $2p$-differential form $\omega$, if the form $B(\omega)$ is closed then the same proof of \cite[Proposition 2.1]{Freed} gives a well-defined element 
$$
\hat{B}(\hat{\omega}) \in \widehat{H^{2p-k}}(X).
$$


In our situation, since $T=I$ and $\eta_p$ is closed, the image of the differential character $\hat{c_p}(q^*E,\tilde{\nabla})$, 
$$
\hat{B}(\hat{c_p}(E,\tilde{\nabla}))\in \widehat{H^{2p-1}}(X)
$$ 
is well-defined. 
Now we note that the form $d\beta$ (see \eqref{betaform}) has a unique lifting 
$$
\hat{d\beta}\in \widehat{H^{2p-1}}(X)
$$
given by the linear functional 
$$
\int\beta:Z_{2p-2}\rar \C/\Z.
$$
Hence the difference of the differential characters
$$
\hat{B}(\hat{c_p}(E,\tilde{\nabla}))\,-\,\hat{d\beta}\, \in \widehat{H^{2p-1}}(X)
$$
lifts the form $\eta_p$, canonically.

Since we assume that $\{\nabla_t\}_{t\in I}$ is a family of flat connections, we note that the form $\eta_p$  is identically zero, whenever $p\geq 2$. 

Hence the difference  
 $$
 \hat{B}(\hat{c_p}(E,\tilde{\nabla}))- \hat{d\beta}\,\in\, \widehat{H^{2p-1}}(X)
$$
has zero projection in $A^{2p-1}(X)$.
Hence this differential character in fact lies in the cohomology group
$$
 \hat{B}(\hat{c_p}(E,\tilde{\nabla}))-\hat{d\beta}\,\in\, H^{2p-2}(X, \C/\Z).
$$
Denote this class by $CS_p(E,\gamma)$.

\end{proof}

In the above proof, we note that the differential character $\hat{B}(\hat{c_p}(E,\tilde{\nabla}))$ lifts the $(2p-1)$-form $TP(\tilde{\nabla})$ and the differential character 
$\hat{B}(\hat{c_p}(E,\tilde{\nabla}))- \hat{d\beta}$ lifts the form $\eta_p$.

\begin{definition}
For any smooth path of connection $\gamma$ between the flat connections $\nabla_0$ and $\nabla_1$, we denote the differential character 
$$
\hat{TP}_p(\tilde{\nabla}):= \hat{B}(\hat{c_p}(E,\tilde{\nabla})),
$$
and
$$
\hat{c_p}(E,\gamma):= \hat{B}(\hat{c_p}(E,\tilde{\nabla}))- \hat{d\beta},
$$
for a uniquely determined form $\beta \in A^{2p-2}(X)/dA^{2p-3}(X)$, as in \eqref{betaform}.

Furthermore, if the path $\gamma$ parametrises flat connections, we call the cohomology classes 
$$
CS_p(E,\gamma) \in H^{2p-2}(X,\C/\Z), \m{ for } p\geq 2
$$ 
the \textit{tertiary classes}, associated to the path $\gamma$.
\end{definition}


\section{Rigidity of the tertiary invariants $CS_p(E,\gamma)$}

Suppose $\nabla_0$ and $\nabla_1$ are two flat connections on a complex vector bundle $E$ on a smooth manifold $X$.
Let $\{\gamma_t\}_{t\in I}$ be a one parameter family of smooth paths of flat connections joining $\nabla_0$ and $\nabla_1$, i.e.,  for each $t$, $\gamma_t$ is a 
smooth path of flat connections joining $\nabla_0$ and $\nabla_1$.

We prove the rigidity property of the corresponding tertiary invariants, i.e.,
$$
CS_p(E,\gamma_{t_0})\,=\, CS_p(E,\gamma_{t_1})
$$
for the end-points $t_0, t_1$ of the path $I$, for appropriate values of $p$.

In other words, the tertiary invariants depend only on the endpoints $\nabla_0$ and $\nabla_1$ and the homotopy class of paths of flat connections between them.

We first give a variational formula.

\begin{proposition}
 Suppose $\{\gamma_s\}_{s\in I}$ is a one parameter family of smooth paths of connections $\{\nabla_{s,t}\}$, joining  two flat connections $\nabla_0$ and $\nabla_1$ on a complex vector bundle $E$, on a smooth manifold $X$.
Consider the corresponding one parameter family of differential characters $\hat{c_p}(E, \gamma_s)$ in $\widehat{H^{2p-1}}(X)$.
Then we have the variational formula:
$$
\frac{d}{ds} \hat{c_p}(E, \gamma_s)\,=\, \int_I P_p(\f{d}{ds}\f{d}{dt} \nabla_{t,s} \wedge (\nabla_{s,t}^2)^{p-1} + (p-1)\f{d}{dt} \nabla_{s,t}\wedge\f{d}{ds}(\nabla_{s,t}^2) \wedge (\nabla_{s,t}^2)^{p-2}).
$$

In particular, 
 if $\{\gamma_s\}$ is a one parameter  family of paths parametrising flat connections, then  the classes:
$$
CS_p(E,\gamma_s) \in H^{2p-2}(X,\C/\Z)
$$
are constant, whenever $p\geq 3$. In other words the tertiary invariants depend only on the homotopy class of a path $\gamma$. 

\end{proposition}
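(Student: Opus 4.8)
The plan is to reduce the variational formula to differentiating the transgression form $\eta_p$, and then to read off the rigidity from the resulting expression. First I would exploit the construction in Theorem \ref{higherCS}: writing $\hat{c_p}(E,\gamma_s)=\hat{B}(\hat{c_p}(q^*E,\tilde\nabla))-\hat{d\beta_s}$, the convex-combination connection $\tilde\nabla=\nabla_0+t(\nabla_1-\nabla_0)$ depends only on the endpoints $\nabla_0,\nabla_1$, hence is independent of $s$. Therefore the entire $s$-dependence of the differential character is carried by $\hat{d\beta_s}$, and since $d\beta_s=TP(\tilde\nabla)-\eta_{p,s}$ with $TP(\tilde\nabla)$ also $s$-independent, differentiating gives $\f{d}{ds}\hat{c_p}(E,\gamma_s)=\f{d}{ds}\eta_{p,s}$ at the level of the underlying forms (the functional $\int\beta_s$ differentiates to $\int\dot\beta_s$, whose coboundary is $-\f{d}{ds}\eta_{p,s}$).

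Next I would compute $\f{d}{ds}\eta_{p,s}$ directly. Writing $\eta_{p,s}=p\int_I P_p(\f{d}{dt}\nabla_{s,t},\nabla_{s,t}^2,\dots,\nabla_{s,t}^2)\,dt$ and differentiating under the integral sign, the key tool is that $P_p$ is the symmetric multilinear polarization of the degree-$p$ invariant polynomial, so the Leibniz rule distributes $\f{d}{ds}$ over the $p$ slots. One contribution differentiates the $\f{d}{dt}\nabla$ slot, producing $P_p(\f{d}{ds}\f{d}{dt}\nabla,\nabla^2,\dots,\nabla^2)$; the remaining $p-1$ contributions each differentiate a curvature slot, and by symmetry they coincide and sum to $(p-1)P_p(\f{d}{dt}\nabla,\f{d}{ds}(\nabla^2),\nabla^2,\dots,\nabla^2)$. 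Collecting these reproduces the stated formula; the only care needed is that $\f{d}{ds}$ commutes with $\int_I\,dt$ and with $d$, which is standard for smooth families.

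For the rigidity assertion I would specialise to a family $\{\gamma_s\}$ of paths of flat connections, so $\nabla_{s,t}^2$ is identically zero. In the first term the factor $(\nabla_{s,t}^2)^{p-1}$ vanishes once $p\geq 2$, and in the second term the factor $(\nabla_{s,t}^2)^{p-2}$ vanishes once $p\geq 3$. Hence for $p\geq 3$ the entire integrand vanishes, so $\f{d}{ds}\hat{c_p}(E,\gamma_s)=0$; since for flat families $\eta_{p,s}\equiv 0$ the character already lies in $H^{2p-2}(X,\C/\Z)$, and a smoothly varying $\C/\Z$-cohomology class with vanishing derivative is constant, giving $CS_p(E,\gamma_{s_0})=CS_p(E,\gamma_{s_1})$.

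The main obstacle, and the reason the conclusion is restricted to $p\geq 3$, is precisely the surviving second term at $p=2$. To make the vanishing rigorous and to see cleanly why $p=2$ is excluded, I would pass to the double-transgression viewpoint: regard $\{\nabla_{s,t}\}$ as one connection on $E$ pulled back to $I_s\times I_t\times X$ and integrate its Chern form $P_p(\Theta,\dots,\Theta)$ over the square $I_s\times I_t$, obtaining a $(2p-2)$-form $\Xi$ on $X$ whose class in $H^{2p-2}(X,\C/\Z)$ computes $CS_p(E,\gamma_1)-CS_p(E,\gamma_0)$ by iterated integration along the fibre, as in \cite{Freed}. On the edges $t=0,1$ the endpoints are held fixed, so those transgressions vanish, and on the edges $s=0,1$ the forms $\eta_p$ vanish by flatness, so $\Xi$ is closed. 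With the $X$-curvature zero, $\Theta$ retains only the mixed components $ds\wedge\f{d}{ds}\nabla$ and $dt\wedge\f{d}{dt}\nabla$, so the $ds\wedge dt$-part of $P_p(\Theta,\dots,\Theta)$ requires one $ds$-slot, one $dt$-slot, and $p-2$ remaining curvature slots; the latter force $\Xi=0$ as soon as $p\geq 3$, whereas for $p=2$ one is left with $\int_{I\times I}P_2(\f{d}{ds}\nabla,\f{d}{dt}\nabla)$, which is in general nonzero. Isolating this obstruction is the crux, and it is exactly what rules out $p=2$.
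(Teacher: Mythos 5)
Your treatment of the variational formula itself follows the paper's own route: both you and the paper observe that all the $s$-dependence of $\hat{c_p}(E,\gamma_s)$ sits in $\hat{d\beta_s}$ (since the convex-combination connection $\tilde\nabla$ depends only on the endpoints), and both then differentiate $\eta_{p,s}$ under the integral sign via the Leibniz rule on the polarized invariant polynomial. Where you genuinely diverge is the rigidity step. The paper, after noting the integral vanishes for flat families, is left with the residual variation $\f{d}{ds}\beta_s$ and disposes of it with the Simons--Sullivan character diagram: $d\beta_s$ lies in $A^{2p-1}_{\Z}$, whose ``tangent space is zero'' by integrality of periods. You instead pass to the double transgression $\Xi=\int_{I_s\times I_t}P_p(\Theta,\dots,\Theta)$ over the parameter square and run a degree count on its $ds\wedge dt$ component. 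Your route buys two things the paper's argument does not: it avoids the delicate tangent-space claim for $A^k_{\Z}$, and it actually explains the bound $p\geq 3$, by exhibiting the surviving $p=2$ obstruction $\int_{I\times I}P_2(\f{d}{ds}\nabla\wedge\f{d}{dt}\nabla)$ (up to a constant factor). The paper's proof never produces this obstruction; indeed its own computation appears to give vanishing already for $p\geq 2$, so the stated restriction to $p\geq 3$ is left unexplained there.

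One point in your write-up needs repair. In your third paragraph you claim that, for a flat family, the second term of the displayed formula survives at $p=2$ because only the factor $(\nabla_{s,t}^2)^{p-2}$ is missing; but for a family of flat connections $\nabla_{s,t}^2\equiv 0$ identically in $(s,t)$, hence $\f{d}{ds}(\nabla_{s,t}^2)\equiv 0$ as well, and the displayed right-hand side vanishes for \emph{every} $p\geq 2$. The true $p=2$ obstruction is the term $P_2(\f{d}{ds}\nabla\wedge\f{d}{dt}\nabla)$ that you only find in your fourth paragraph, and it does not occur in the displayed variational formula at all: that formula records only the variation of the curvature of the character, whereas the obstruction lives in the flat ($\C/\Z$) part --- $\f{d}{ds}\beta_s$ is closed but need not be exact, so ``vanishing derivative of the curvature'' does not imply the class is constant. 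Consequently your fourth paragraph is not merely ``making the vanishing rigorous''; it is the proof, and the inference at the end of your third paragraph (vanishing integrand $\Rightarrow$ $\f{d}{ds}\hat{c_p}(E,\gamma_s)=0$) should be deleted or rerouted through the double-transgression argument.
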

\begin{proof}
We first note the following remark on tangent spaces \cite[Lemma 2.1]{Iy-Si}:
$$
T_0(\widehat{H^{2p-1}}(X))\,=\, \f{A^{2p-2}(X,\C)}{dA^{2p-3}(X,\C)}
$$
given by the identification $(t\int_z \alpha, t.d\alpha) \mapsto \alpha$.

We want to express the form $\eta_p$ as a locally exact form $d\alpha =\eta_p$ for a form $\alpha \in \f{A^{2p-2}(X,\C)}{dA^{2p-3}(X,\C)}$, so that we can calculate the variation of $\alpha$.  

We can now assume that the path $\gamma_0$ (when $s=0$) is a trivial path based at $\nabla_0$. This means that the form $\eta_p=0$, when $s=0$.

Let $\ov{\nabla}$ be a connection on $I\times I\times X$ defined as follows:
$$
s[(1-t)\nabla_0 +t\nabla_1] + (1-s)d
$$
on the pullback bundle $q^* E$, for the projection $q: I\times I \times X \rar X$.
This means that the $(2p-1)$-form $TP_p(\ov{\nabla})_0=0$.

Let $\ov{\nabla}_s$ denote the restriction of the connection $\ov{\nabla}$, on the slice $\{s\}\times I \times X$.

Now consider the $(2p-2)$-form
$$
TP(\ov{\nabla})= \int_I \int_I P(\ov{\nabla}^2,..., \ov{\nabla}^2).
$$
This is the same as the integral
$$
\int_I TP(\ov{\nabla}_s)= \int_I ((\eta_p)_s + d\beta_s).
$$
By Stoke's theorem, we have
$$
dTP(\ov{\nabla}) = TP((\ov{\nabla})_1)- TP((\ov{\nabla})_0).
$$
When $s=0$, $\gamma_0$ is a trivial path, and we note that $TP(\ov{\nabla}_0)=0$.

In particular, when $s=1$, the form $(\eta_p)_1$ is expressed as
$$
dTP(\ov{\nabla}) -d\beta_1  \,=\,  (\eta_p)_1.
$$

We now compute the variation of the family of differential characters :
\begin{eqnarray*}
\f{d}{ds}\hat{c_p}(E, \gamma_s) & = &  \f{d}{ds} (TP(\ov{\nabla})- \beta_s) \\
                                & = & \f{d}{ds}[ (\int_I (\eta_p)_s +d\beta_s) -\beta_s ] \\
                                & = & \f{d}{ds} [ (\int_I (\eta_p)_s) +\beta_1 -\beta_s].
\end{eqnarray*}

Now we note that
\begin{eqnarray*}
\f{d}{ds} \int_I (\eta_p)_s & = & \f{d}{ds} \int_I P_p(\f{d}{dt} {\nabla}_{t,s},  {\nabla}^2_{t,s},..., {\nabla}^2_{t,s}) dt \\
                            & = &   \int_I \f{d}{ds}  P_p(\f{d}{dt} {\nabla}_{t,s},  {\nabla}^2_{t,s},..., {\nabla}^2_{t,s}) dt \\
                            & = & \int_I P_p(\f{d}{ds}\f{d}{dt} \nabla_{t,s} \wedge (\nabla_{s,t}^2)^{p-1} + (p-1)\f{d}{dt} \nabla_{s,t}\wedge\f{d}{ds}(\nabla_{s,t}^2) \wedge (\nabla_{s,t}^2)^{p-2}). 
\end{eqnarray*}

We now observe that if $\{\nabla_{s,t}\}$ parametrises flat connections and if  $p\geq 2$, then this integral is identically zero.

Hence we get the equality:
$$
\f{d}{ds}\hat{c_p}(E, \gamma_s)\,=\, \f{d}{ds}\beta_s.
$$

To deduce that this quantity is zero, we recall the character diagram involving the differential cohomology and the exact sequence of cohomologies associated to the coefficient sequence
$$
0\rar \Z \rar \C \rar \C/\Z \rar 0.
$$
See \cite{Sm-Su}.

\begin{eqnarray*}
 0\,\,\,\,\,\,\, \,\, \,\,\,\, \,\,\,\,\,     &      &\,\,\,\,\,\,\,\,\,\,\,\,\, \,\,\,0 \\
\searrow \,\,\,\,\,\,\,\,&       & \,\,\,\,\,\,\,\,\nearrow \\
 H^{k-1}(\C/\Z) &\rar & H^k(\Z) \\
 \nearrow\,\,\,\,\, \,\,\,  & \searrow \,\,\,\,\,\,\,\,\,\,\,\nearrow & \,\,\,\,\,\,\,\, \searrow \,\,\,\,\,\,\,\nearrow\\
H^{k-1}(\C) \,\,\,\,& \hat{H^{k}}(X)  & \,\,\,\,\,\,\,\,\,H^k(\C) \\
\nearrow\,\,\,\,\,\,\,\,\,\, \searrow \,\,\,\,\,\,\,\,\,\,\,& \nearrow \,\,\,\,\,\,\,\,\searrow  & \,\,\,\,\nearrow \,\,\,\,\,\,\,\,\,\,\,\,\,\,\searrow\\
  \f{A^{k-1}}{A^{k-1}_\Z} &\sta{d}{\rar} & A^k_\Z \\
 \nearrow\,\,\,\,\,\,\,\,\, & & \,\,\,\,\,\,\,\searrow \\
  0\,\,\,\,\,\,\, \,\, \,\,\,\, \,\,\,\,\,     &      &\,\,\,\,\,\,\,\,\,\,\,\,\, \,\,\,0 .\\
\end{eqnarray*}

In our situation, $k=2p-1$ and the forms $\beta_s$ lie in the group $A^{k-1}/A^{k-1}_\Z$ and under the map $d$, the forms $d\beta_s$ lie in $A^k_\Z$.
Since the tangent space of $A^k_\Z$ is equal to zero, the variation of $\beta_s$ is the same as the variation of $d\beta_s$, which is zero. This implies that the form $d\beta_s$ is constant in a variation over $s$.

Hence we have shown that 
$$
\f{d}{ds}\hat{c_p}(E, \gamma_s)\,=\,0
$$
if $\gamma_s$ is a one parameter family of paths parametrising flat connections, and whenever $p\geq 3$.

\end{proof}


\section{Chern-Simons classes in compactly supported cohomology}\label{compactsupp}

Suppose $(E,\nabla)$ is a smooth connection on a smooth variety or manifold. 

We observe that differential characters of $(E,\nabla)$ can be defined in the 'compactly supported' differential cohomology as follows. In particular, we have
\begin{proposition}
 Given a flat connection $(E,\nabla)$ on a smooth manifold, the Chern-Simons classes $CS_p(E,\nabla) \in H^{2p-1}(X,\C/Z)$ can be lifted canonically into the compactly supported cohomology
$$
CS_p(E,\nabla) \in H^{2p-1}_c(X,\C/Z).
$$
\end{proposition}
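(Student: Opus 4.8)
The plan is to lift the Chern--Simons class $CS_p(E,\nabla)\in H^{2p-1}(X,\C/\Z)$ into compactly supported cohomology by replacing ordinary smooth Deligne/differential cohomology with its compactly supported analogue throughout the construction of Theorem \ref{canDC}. Concretely, I would define the compactly supported differential cohomology group $\widehat{H^{p}_c}(X)$ exactly as in \eqref{diffcoh}, but using compactly supported forms and compactly supported ordinary cohomology, so that it sits in an exact sequence
\begin{equation}\label{diffcohc}
0\rar H^{p-1}_c(X,\C/\Z) \rar \widehat{H^{p}_c}(X) \rar (A^{p}_{c})_\Z(X)_{cl} \rar 0,
\end{equation}
where $(A^{p}_{c})_\Z(X)_{cl}$ denotes closed compactly supported $p$-forms with integral periods. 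The functional part of a compactly supported differential character is defined on cycles as before, and the key point is that the natural "extension by zero'' map from compactly supported objects to ordinary ones is compatible with both the functional and the curvature, so a compactly supported lift, once constructed, maps to the ordinary $CS_p(E,\nabla)$.

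The main step is to produce the compactly supported character itself. For a flat connection the curvature $\Theta=\nabla^2$ vanishes, so the relevant curvature form in \eqref{diffcohc} is zero, and the class we seek already lives in the $H^{2p-1}_c(X,\C/\Z)$ summand; there is thus no obstruction coming from the form-level piece. The content is therefore to exhibit the functional $f\colon Z_{2p-2}(X)\rar\C/\Z$ in the compactly supported sense. Here I would follow the classifying-space argument of Theorem \ref{canDC}: pull back the universal differential character on the Grassmannian $G(r,N)$ via the classifying map of $(E,\nabla)$. The point is that for a \emph{flat} connection the Chern form is identically zero, so the universal $\hat c_p$ restricts to a genuine cohomology class with $\C/\Z$ coefficients, and one checks that its pullback is represented by a cocycle supported in a neighbourhood of the (compact) support of the data, hence descends to a compactly supported class.

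I expect the genuine obstacle to be the noncompactness bookkeeping: on a noncompact $X$ the universal-connection pullback naturally produces a class in \emph{ordinary} cohomology, and one must argue that it admits a canonical compactly supported refinement. Unlike the closed case, there is no a priori reason that the functional $\int\beta$ or the transgressed cochain can be chosen with compact support, and the map $H^{2p-1}_c(X,\C/\Z)\rar H^{2p-1}(X,\C/\Z)$ is generally neither injective nor surjective, so "canonical'' must be interpreted as a distinguished preimage rather than a literal equality. The resolution I would pursue is to use that flatness gives a locally constant structure on $E$, so the Chern--Simons cochain can be represented using a simplicial/Čech model adapted to a locally finite good cover; restricting to the support and using a partition of unity gives a compactly supported representative whose image recovers $CS_p(E,\nabla)$, establishing the canonical lift of the Proposition.
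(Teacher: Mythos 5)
Your setup (defining the compactly supported differential cohomology and observing that flatness kills the curvature component, so the character lands in $H^{2p-1}_c(X,\C/\Z)$) matches the paper. But the core step --- producing the compactly supported functional for an \emph{arbitrary} flat connection --- contains a genuine gap, and in fact cannot be repaired: without an extra hypothesis the statement is false. Take $X=Y\times\R$ with $Y$ compact carrying a flat bundle $(E_0,\nabla_0)$ with $CS_p(E_0,\nabla_0)\neq 0$ (e.g.\ a lens space for $p=2$, or $Y=S^1$ with nontrivial holonomy for $p=1$), and pull back along the projection. By the K\"unneth formula with compact supports, $H^{k}_c(Y\times\R,\C/\Z)\cong H^{k-1}(Y,\C/\Z)$, and the comparison map $H^{k}_c(Y\times\R,\C/\Z)\rar H^{k}(Y\times\R,\C/\Z)$ factors through $H^1_c(\R)\rar H^1(\R)=0$, hence is the zero map; meanwhile $CS_p$ of the pulled-back flat connection is the (nonzero) pullback of $CS_p(E_0,\nabla_0)$. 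So no compactly supported lift exists at all, canonical or otherwise. This also shows why your two concrete mechanisms break down: a general flat connection on a noncompact manifold has no ``compact support of the data'' to localize near, and multiplying a representing cocycle by a partition-of-unity cutoff destroys the cocycle (closedness) condition, so it does not produce a class in $H^{2p-1}_c$, let alone the same class.

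The paper sidesteps exactly this issue by inserting a hypothesis \emph{inside the proof}: it assumes $\nabla$ takes values in $A^1_c(X)$, i.e.\ the connection agrees with the standard trivial one outside a compact set. Under that assumption the classifying map to the Grassmannian (Theorem \ref{canDC}) can be chosen constant near infinity, and since the Grassmannian is compact --- so that compactly supported forms and cohomology coincide with the ordinary ones in the universal situation --- the pulled-back Cheeger--Simons character automatically comes with compact supports; flatness then places it in $H^{2p-1}_c(X,\C/\Z)$, and compatibility with $H^{2p-1}_c\rar H^{2p-1}$ is immediate. You correctly diagnosed the obstruction (your remark that the map $H^{2p-1}_c(X,\C/\Z)\rar H^{2p-1}(X,\C/\Z)$ is neither injective nor surjective is exactly the point), but the resolution you propose does not overcome it; what is needed is the restriction on $\nabla$ at infinity, which your proposal omits and which the proposition's statement itself leaves implicit.
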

\begin{proof}

Denote
$$
\widehat{H^{2p}_c}(X):= \{ (f,\alpha): f: Z_{2p-1}(X)\rar \C/\Z,\, \delta(f)= \alpha, \alpha \m{ is a closed compactly supp.form}\}.
$$
Here $Z_{2p-1}(X)$ is the group of infinite $(2p-1)$- closed chains.
 
Then there is a short exact sequence:
$$
0\rar H^{2p-1}_c(X, \C/\Z) \rar \widehat{H^{2p}_c}(X) \rar A^{2p}_{ccl,\Z} \rar 0.
$$
Here $A^{2p}_{ccl,\Z}$ denotes the space of compactly supported closed forms of degree $2p$ on $X$ with integral periods.

Suppose $\nabla$ takes values in $A^1_c(X)$ (instead of $A^1(X)$), then the usual theory of differential characters (see Theorem \ref{canDC}), will define classes:
$$
\widehat{c_p}(E,\nabla)\,\in\, \widehat{H^{2p}_c}(X).
$$

This is because in the universal situation $(\cE,\tilde{\nabla})$ lives on a  Grassmannian, for which the compactly supported forms/cohomology is the same as usual forms/cohomology.

Now suppose $\nabla$ is flat, or whenever the Chern form $c_p(E,\nabla)$ is identically zero, then it defines a class:
$$
\widehat{c_p}(E,\nabla)_c\,\in\, H^{2p-1}_c(X,\C/\Z).
$$

It is clear that under the usual map of cohomologies:
$$
H^{2p-1}_c(X,\C/Z) \rar H^{2p-1}(X,\C/Z)
$$
we have
$$
\widehat{c_p}(E,\nabla)_c \mapsto \widehat{c_p}(E,\nabla).
$$
\end{proof}


\begin{thebibliography}{AAAAA}

\bibitem[Cg-Sm]{Ch-Sm} J. Cheeger, J. Simons, {\em
Differential characters and geometric invariants}, Geometry and topology (College Park, Md., 1983/84), 
50--80, Lecture Notes in Math., \textbf{1167}, Springer, Berlin, 1985. 

\bibitem[Ch-Sm]{Chn-Sm} S.S. Chern, J. Simons, {\em Characteristic forms and geometric invariants}, The Annals of Mathematics, Second Series, Vol. 99, No. \textbf{1}, 1974.

\bibitem[Fr]{Freed}  D.S. Freed,  {\em Classical Chern-Simons theory. II.} Special issue for S. S. Chern. Houston J. Math. 28 (2002), no. \textbf{2}, 293--310.

\bibitem[Iy-Si]{Iy-Si} J. N. Iyer, C. T. Simpson, {\em Regulators of canonical extensions are torsion; the smooth divisor case}, preprint 2007, arXiv math.AG/07070372.

\bibitem[Kb]{Karoubi} M. Karoubi,
{\em Th\'eorie g\'en\'erale des classes caract\'eristiques secondaires.} K-theory t. \textbf{4}, p. 55-87 (1990). 

\bibitem[Na-Ra]{Narasimhan} M. S. Narasimhan, S. Ramanan, {\em Existence of universal connections},  Amer. J. Math.  \textbf{83}  1961 563--572. 

\bibitem[Sm-Su]{Sm-Su} J. Simons, D. Sullivan, {\em Axiomatic characterization of ordinary differential cohomology}. J. Topol. 1 (2008), no. 1, 45--56.

\end {thebibliography}


\end{document}